\newcommand{\R}{{\mathbb R}}
\renewcommand{\L}{{\mathbb L}}
\newcommand{\Rnn}{{\mathbb R}_{\ge 0}}
\newcommand{\Rp}{{\mathbb R}_{> 0}}
\newcommand{\C}{{\mathbb C}}
\newcommand{\cC}{{\mathcal C}}
\newcommand{\cD}{{\mathcal D}}
\newcommand{\cK}{{\mathcal K}}
\newcommand{\cO}{{\mathcal O}}
\newcommand{\cQ}{{\mathcal Q}}
\newcommand{\cS}{{\mathcal S}}
\newcommand{\diag}[1]{{\mathrm{diag}}\{#1\}}
\newcommand{\bfone}{\mathbf 1}
\newcommand{\inter}{\mathrm{int}}
\newcommand{\Lone}{\mathrm L_{1}}
\newcommand{\Lp}{\mathrm L_{p}}
\newcommand{\Linf}{\mathrm L_{\infty}}
\renewenvironment{proof}[1]{{\it Proof:\,}}{\hfill\QED\par}
\def\QED{\mbox{\rule[0pt]{1.3ex}{1.3ex}}}
\newenvironment{proof-of}[1]{{\it Proof of #1:\,}}{\hfill\QED\par}
\newtheorem{thm}{Theorem}
\newtheorem{cor}{Corollary}
\newtheorem{defn}{Definition}
\newtheorem{prop}{Proposition}
\newtheorem{rem}{Remark}
\title{Properties of Eventually Positive Linear Input-Output Systems}
\author{Aivar Sootla}
\begin{document}
% 	\supertitle{Submission Template}
% \title{Properties of Eventually Positive Linear Input-Output Systems}
% \author{\au{Aivar Sootla$^{1}$}
% }
% \address{\add{1}{Department of Engineering Science, University of Oxford, Parks Road, Oxford, OX1 3PJ, U.K.}
% 	\email{aivar.sootla@eng.ox.ac.uk}}
\maketitle
\begin{abstract}
\looseness=-1 In this paper, we consider systems with trajectories originating in the nonnegative orthant and becoming nonnegative after some finite time transient. These systems are called eventually positive and our results are based on recent theoretical developments in linear algebra. We consider dynamical systems (i.e., fully observable systems with no inputs), for which we compute forward-invariant cones and Lyapunov functions. We then extend the notion of eventually positive systems to the input-output system case. Our extension is performed in such a manner, that some valuable properties of classical internally positive input-output systems are preserved. For example, their induced norms can be computed using linear programming and the energy functions have nonnegative derivatives. We illustrate the theoretical results on numerical examples.
\end{abstract}
\maketitle

\section{Introduction}
Matrices with nonnegative entries (or nonnegative matrices) have received a considerable attention in literature~\cite{berman1994nonnegative} due to strong theoretical results such as Perron-Frobenius theorem describing their spectral properties. Nonnegative matrix theory has been later applied to dynamical systems in the context of positive systems (systems with nonnegative trajectories forward in time for nonnegative initial conditions). These systems also appear in practical applications such as economics~\cite{leontief1986input}, biology~\cite{sontag2007monotone} for clear reasons: the states have a physical interpretation prohibiting negative values. Positive systems with inputs have become a major topic in control theory since these systems naturally allow decentralized stability tests, decentralized control and model reduction algorithms~\cite{briat2013robust, rantzer2015ejc, tanaka2011bounded, Sootla2012positive}. 

In the input-output setting, \emph{externally positive systems} were studied from system-theoretic point of view~\cite{anderson1996nonnegative}. These systems are characterized by nonnegative outputs given nonnegative inputs and the zero initial condition, however, their state trajectories often are not positive for all future times and some state trajectories can become negative for some nonnegative initial conditions. Recently, it was shown in~\cite{altafini2015realizations} that there are externally positive systems with \emph{eventually} positive state trajectories, that is, trajectories from nonnegative initial conditions become (and stay) nonnegative after some initial transient. Eventual positivity stems from studying Perron-Frobenius theorem (see~\cite{elhashash2008general} and the references therein). In particular, eventually positive systems are fully characterized by this remarkable result, in particular, the strong version Perron-Frobenius theorem sets necessary and sufficient conditions for strong eventual positivity. Recently, eventual positivity was extended to arbitrary cones and matrix exponentials~\cite{olesky2009m,kasigwa2017eventual} preparing all the necessary ingredients for the control theoretic approach to eventual positivity, which is the main topic of this paper.

We first consider dynamical eventually positive systems, which are studied in~\cite{kasigwa2017eventual} from the linear algebra point of view and we offer a control-theoretic one. We show that any eventually positive system is also positive with respect to some cone, which however may be hard to characterize explicitly or it may be hard to use. We also derive straightforward formulas to compute invariant cones and Lyapunov functions for eventually positive systems under some additional assumptions on the system matrix. We then consider positive input-output systems, which are characterized by eventually positive trajectories of the states and positive outputs. We extend some of the properties of positive systems to this case such as: computation of induced norms, properties of energy functions and discuss implications for model reduction. 

It is worth mentioning that some of the strong properties of positive systems exploit scaled diagonal dominance of the system matrices rather than nonnegativity of the trajectories. In fact, it was shown that a subset of scaled diagonally dominant matrices admit diagonal Lyapunov functions~\cite{hershkowitz1985lyapunov}, which can be computed by linear programming/algebra~\cite{sootla2016existence}. These results were then extended to block-partitioned matrices in~\cite{sootla2017blocksdd}. Eventually positive systems can be considered as a complimentary extension of positive systems with a different set of retained properties.

The rest of the paper is organized as follows. In Section~\ref{s:even-pos}, we introduce notation and cover some preliminary results on positive and eventually positive systems. In Section~\ref{s:results} we present the theoretical results of the paper, which we illustrate on examples in Section~\ref{s:example}. We conclude in Section~\ref{s:con}.

\section{Preliminaries\label{s:even-pos}}
\subsection{Notation}
A diagonal matrix $A\in \C^{n\times n}$ with elements $a_i$ on its diagonal is denoted as $\diag{a_1, \dots, a_n}$. For a matrix $A \in \R^{n\times n}$, we assume that its eigenvalues $\{\lambda_1, \dots, \lambda_n \}$ (counted with their algebraic multiplicities) are ordered according to their real parts $\Re(\lambda_i) \ge \Re(\lambda_j)$ for all $i \le j$. The spectral radius $\rho(A)$ is defined as the maximum absolute value of its eigenvalues, that is $\rho(A) = \max\{|\lambda_1|, \dots, |\lambda_n| \}$. The spectral abscissa $\eta(A)$ is defined as the maximum real value of its eigenvalues, that is $\eta(A) = \max\{\Re(\lambda_1), \dots, \Re(\lambda_n) \}$. The matrix $A$ is called diagonalizable if the algebraic and geometric multiplicites of its eigenvalues are equal or equivalently, if there exists an invertible matrix $V$ such that $A = V \diag{\lambda_1, \dots, \lambda_n} W$, where $V^{-1} = W$, the columns $v^i$ (respectively, $w^i$) of the matrix $V$ (respectively, $W$) are the right (respectively, left) eigenvectors corresponding to the eigenvalues $\lambda_i$. We will write $A\ge 0$ for $A\in\R^{n\times n}$, if all the entries $A_{i j}$ of $A$ are nonnegative, $A>0$ if the matrix $A\ge 0$ and $A$ is not equal to zero, and $A\gg 0$, if all $A_{i j}$ are positive. The matrix $A^T$ denotes the transpose of $A$. We call the system $\dot x= A x$ invariant with respect to a set $\cS$, if the trajectories $x(t)\in\cS$ provided that the initial condition $x_0\in\cS$ or equivalently $e^{A t} \cS\subseteq \cS$ for all $t\ge 0$. 
A set $\cK$ is called a \emph{proper cone} if $\alpha v\in \cK$ for $v\in\cK$ and $\alpha\in\Rp$ (meaning that $\alpha \cK \subseteq \cK$ for all $\alpha \in\Rp$), $\cK + \cK \subseteq \cK$, $\cK\cap -\cK \subseteq \{ 0\}$, $\cK$ is closed and its interior (denoted as $\inter(\cK)$) is nonempty. Throughout this paper, we will employ only proper cones. We define the nonnegative orthant $\Rnn^n = \{x\in\R^n | x_i \ge 0, \, \forall i =1, \dots,n \}$ and the positive $\Rp^n$ as its interior. The cone $\cK = \{x\in\R^n| \sum_{i = 2}^n x_i^2 \le x_1^2, x_1 \ge 0 \}$ is called a Lorentz cone. The set $\cK^\ast$ denotes a dual set to $\cK$ namely $\cK^\ast = \{u\in \R^n | u^T v\ge 0,\, \forall v\in \cK \}$. We denote the boundary of a set $\cK$ as $\partial \cK$. We define the partial order using proper cones as follows, $x\preceq_\cK y$ if $x-y\in\cK$, we also write $x\prec_\cK y$ if $x\preceq_\cK y$ and $x\ne y$, and $x\ll_\cK y$ if $x-y\in\inter(\cK)$. If the order is induced by $\Rnn^n$, we drop the subscript and simply write $x\preceq y$, $x\prec y$, $x\ll y$.

\subsection{(Eventually) Positive Systems}
In this section, we present a characterization of the following class of linear dynamical systems. 
\begin{defn}\label{def:ev-pos}
The system $\dot x = A x$ is \emph{eventually $\cK$-positive} if  there exists $\tau_0\ge 0$ such that  $e^{A t} x \subseteq \cK$ for any $x \in \cK$ and for all $t\ge \tau_0$. The system is \emph{strongly eventually $\cK$-positive} if  $x\ne 0$ there exists $\tau_0\ge 0$ such that  $e^{A t} x\in\inter(\cK)$ for any $x \in \cK$ and for all $t\ge \tau_0$. If $\tau_0 = 0$, we call the systems simply $\cK$-positive and strongly $\cK$-positive, respectively. 
\end{defn}
% Since we consider only proper cones in the paper, it is worth mentioning that in this case we can switch the predicates $\forall x\in \cK$ and $\exists \tau_0$, that is, we can choose $\tau_0$ uniformly with respect to $x$.
% \begin{prop}
% 	If $\cK$ is a proper cone and the system $\dot x = Ax$ is (strongly) eventually positive then $\tau_0$ can be chosen uniformly with respect to $x$.
% \end{prop}
% \begin{proof}
%     We will denote the uniform value as $\hat \tau_0$, and $\tau_0(x)$ signifies the dependence on $x$. Note that $\tau_0(x)$ are bounded since the cone $\cK$ is closed, otherwise for some $x\in\cK$ the value $\tau_0(x)$ is not finite. Let $\cB$ be a unit ball around the origin, then we can pick $\hat \tau_0 = \sup\limits_{x\in \cK\cap\cB} \tau_0(x)$ and the supremum is bounded since $\tau_0$ are bounded and $\cK\cap \cB$ is compact. The strongly eventually positive case is shown in a similar manner.
% \end{proof}

Note that in our definition $\tau_0$ is chosen uniformly with respect to the vectors $x$.\footnote{In the previous version of our paper, we claimed that it is possible to switch the order of the predicates $\forall t\ge \tau_0$ and $\forall x$ if the cone $\cK$ is proper. This, however, is incorrect as explained in~\cite{gluck2023eventual}.} We can compactly define eventual $\cK$-positivity as $e^{A t} \cK \subseteq \cK$ for all $t\ge \tau_0$. If the cone $\cK$ is equal to $\Rnn^n$, then we refer to these systems as  (strongly) (eventually) positive. $\cK$-positive systems have been studied extensively in the literature and several necessary and sufficient conditions were proposed to certify positivity~\cite{hirsch2005monotone}.
\begin{prop}
	The system $\dot x = A x$ is $\cK$-positive if and only if $\lambda \in \cK^\ast$, $x \in \partial \cK$, $\lambda^T x = 0$ implies  that $\lambda^T A x \ge 0$. 
\end{prop}

Note that in the case of polyhedral cones $\cK$, the certificates reduces to a finite number of inequalities. Furthermore, in the case of $\Rnn^n$-positive systems (or more generally orthant-positive systems), additional powerful properties can be obtained. For example, in terms of computation of Lyapunov functions (cf.~\cite{rantzer2015ejc}). Positivity was also studied in the context of input-output systems such as
\begin{gather}\label{sys:in-out}
G =\left\{ \begin{aligned}
\dot x & = A x + B u\\
 y &= C x + D u
\end{aligned}\right.
\end{gather}
For example, the following class of systems received attention in the literature~\cite{anderson1996nonnegative}.
\begin{defn}
	A linear system is called \emph{externally positive}, if for the zero initial condition $x_0 = 0$, and any control signal $u(t)\ge 0$, we have that $y(t) \ge 0$ for all $t \ge 0$.
\end{defn}
It is well-known that the impulse response $C e^{A t} B + D$  is nonnegative for all $t>0$ if and only if the system is externally positive. Let $g(t) = (Ce^{A t} B + D)\delta(t)$, then the $p$-induced norm of $G$ is defined as follows:
\begin{gather*}
\|G \|_{\rm p-ind} = \sup\limits_{u \in\L_p^m[0,\infty)}\frac{\|g\ast u\|_p}{\|u\|_p},
\end{gather*}
where $\|u(t)\|_p = (\sum_k \int_0^\infty |u_k(t)|^p)^{1/p}$. The following remarkable result establishes straightforward (algebraic) computations of such norms:

\begin{prop}[Theorem~3 in~\cite{rantzer2015ejc}] \label{prop:induced-norms}
	Let $g(t) = (C e^{A t} B + D) \delta(t)$, where $C e^{A t} B \ge 0$ for $t\ge 0$ and $D\ge 0$, while $A$ is Hurwitz. Then $\|g\|_{\rm p-ind} = \|D - C A^{-1} B\|_{\rm p-ind}$ for $p =1$, $p =2$, $p=\infty$. In particular, if $g$ is scalar, then $\|g\|_{\rm p-ind} = |D - C A^{-1} B|$ for all $p\in[1,\infty]$.
\end{prop}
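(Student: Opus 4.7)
The plan is to establish each of the three cases by pairing a convolution/triangle-inequality upper bound with a constant-input lower bound, exploiting throughout that $C e^{At} B$ is entrywise nonnegative on $[0,\infty)$ and $D \ge 0$. Since $A$ is Hurwitz, one has $G(0) = D - C A^{-1} B = D + \int_0^\infty C e^{A t} B\, dt$, so the $(i,j)$ entry of $G(0)$ is exactly the $\Lone$-mass of the $(i,j)$ channel of the impulse response; this identity is the bridge between the time-domain convolution and the algebraic matrix on the right-hand side.

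For $p = \infty$, I would pass the absolute value inside the convolution, which is lossless because the kernel $C e^{At}B$ is nonnegative: $|(g \ast u)_i(t)| \le \sum_j \int_0^t (C e^{A(t-\tau)}B)_{ij}\, d\tau \cdot \|u\|_\infty + |(Du(t))_i| \le \|G(0)\|_{\infty\textrm{-ind}}\,\|u\|_\infty$. For the matching lower bound I would choose $u_j(t) \equiv \sgn(G(0)_{i^\ast j})$ with $i^\ast$ attaining the maximal row sum of $G(0)$; as $t \to \infty$ the output $(g\ast u)_{i^\ast}(t)$ converges to $\sum_j G(0)_{i^\ast j}\sgn(G(0)_{i^\ast j}) = \sum_j G(0)_{i^\ast j} = \|G(0)\|_{\infty\textrm{-ind}}$, where the collapse of the signs is exactly the point at which nonnegativity of $G(0)$ is used. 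The case $p = 1$ is dual: one applies the $p=\infty$ argument to the adjoint impulse response $g(t)^T$, whose static gain is $G(0)^T$ and whose $\infty$-induced norm equals the $1$-induced norm of the original system.

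For $p = 2$, the main obstacle, the strategy is to invoke the classical identity $\|g\|_{2\textrm{-ind}} = \|G\|_{\Hinfty} = \sup_{\omega \in \R}\|G(j\omega)\|_2$, valid because $A$ is Hurwitz and $D$ is a bounded feedthrough. The crux is the entrywise frequency-domain bound
\begin{equation*}
|G(j\omega)_{ij}| = \left| D_{ij} + \int_0^\infty (C e^{A t} B)_{ij}\, e^{-j\omega t}\,dt \right| \le D_{ij} + \int_0^\infty (C e^{A t} B)_{ij}\,dt = G(0)_{ij},
\end{equation*}
which uses both $C e^{At}B \ge 0$ and $D \ge 0$ to kill the oscillatory phase factor. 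This must then be combined with the Perron-type matrix inequality ``$\|M\|_2 \le \|N\|_2$ whenever $N \ge 0$ and $|M_{ij}| \le N_{ij}$'', which I would prove in one line by the chain $\|M x\|_2 \le \||M|\,|x|\|_2 \le \|N |x|\|_2 \le \|N\|_2 \|x\|_2$. Applied at each $\omega$ this gives $\|G(j\omega)\|_2 \le \|G(0)\|_2$, and evaluating at $\omega = 0$ supplies the matching lower bound.

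The scalar statement then follows easily: when $g$ is scalar and nonnegative, the $p=1$ and $p=\infty$ endpoints both collapse to $\|g\|_{\Lone} + D = G(0)$, and Young's convolution inequality $\|g \ast u\|_p \le \|g\|_{\Lone}\|u\|_p$, paired with a DC-input lower bound, yields the same value $G(0)$ for every intermediate $p \in [1,\infty]$. Among the three cases the only genuinely delicate step is $p = 2$: the Perron-type spectral comparison is where the nonnegativity of the impulse response enters in an essential, non-obvious way, and it is the step most likely to fail if the sign assumption were weakened.
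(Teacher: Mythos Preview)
The paper does not supply its own proof of this proposition: it is quoted verbatim as Theorem~3 of Rantzer~\cite{rantzer2015ejc} and used as a black box in the subsequent arguments. There is therefore nothing in the present paper to compare your argument against.

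That said, your proof sketch is correct and is essentially the standard argument for this result. The identity $G(0) = D + \int_0^\infty C e^{At} B\, dt$ is the right bridge; the $p=\infty$ case via the triangle inequality on the convolution (lossless because the kernel is entrywise nonnegative) together with a constant-input lower bound is exactly how Rantzer proceeds, as is the $p=1$ case by duality. Your $p=2$ argument via the entrywise Fourier bound $|G(j\omega)_{ij}| \le G(0)_{ij}$ followed by the Perron-type comparison $\|M\|_2 \le \|N\|_2$ when $|M| \le N$ and $N \ge 0$ is also the standard route. The scalar interpolation via Young's inequality is fine. One cosmetic point: in your $p=\infty$ upper bound, the passage from $\sum_j \int_0^t (Ce^{A(t-\tau)}B)_{ij}\,d\tau \cdot \|u\|_\infty + |(Du(t))_i|$ to $\|G(0)\|_{\infty\textrm{-ind}}\|u\|_\infty$ silently uses $D \ge 0$ to bound $|(Du(t))_i| \le \sum_j D_{ij}\|u\|_\infty$ and then merges the two sums; this is routine but worth making explicit.
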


In this paper, we will focus on the following class of systems
\begin{defn}
	A linear system is called \emph{internally eventually positive}, if for any initial condition $x_0 \ge 0$, and any control signal $u(t)\ge 0$, we have that $y(t) \ge 0$ for all $t \ge 0$ and there exists $\tau_0$ such that $x(t) \ge 0$ for all $t\ge \tau_0$ and for all $x_0\ge 0$. We call a system internally positive if $\tau_0 = 0$.
\end{defn}
The case of $\tau_0 = 0$ is well-studied and it is well-known a system is internally positive system if and only if $C$, $B$, $D$ are nonnegative and $A$ is Metzler (i.e., off-diagonal elements are nonnegative)~\cite{kaczorek2001externally}. Internally positive systems, admit an even easier characterization of the induced norms, since we can exploit positivity of the realization~\cite{rantzer2015ejc, rantzer2016kalman}. The goal of this paper is to develop similar results in the context of (internally) eventually positive systems. A characterization of eventually $\cK$-positive dynamical systems was obtained in~\cite{kasigwa2017eventual}. We reformulate this result using the linear systems language and prove it for completeness.

\begin{prop}\label{prop:ev-pos-dyn}
	Consider the proper cone $\cK$ and the system $\dot x = A x$ with $\lambda_j$ being the eigenvalues of $A$ ordered such that $\Re(\lambda_i)\ge \Re(\lambda_j)$ for all $i\ge j$. \\
	(i) If the system is eventually $\cK$-positive and $A$ is diagonalizable, then $\eta(A)$ is the eigenvalue of $A$ and the corresponding right and left eigenvectors $v$, $w$ can be chosen such that $v\in \cK$, $w\in \cK^\ast$ (and  $\eta(A)= \lambda_1$);\\
	(ii) The system is strongly eventually $\cK$-positive if and only if $\eta(A)$ is a simple eigenvalue and $\eta(A)= \lambda_1 > \Re(\lambda_j)$ for all $j \ge 2$, while the right and left eigenvectors $v$, $w$ of $A$ corresponding to $\lambda_1$ can be chosen such that $v\in \inter(\cK)$, $w\in \inter(\cK^\ast)$.
\end{prop}
\begin{proof} 
	(i) $e^{A t} x$ belongs to $\cK$ for all $t\ge \tau_0$ for any $x \in \cK$. We will employ the point (i) in Theorem~12 from~\cite{kasigwa2017eventual}. Note that our definition of eventually positive systems $\dot x = A x$ is equivalent to $A$ being eventually exponentially $\cK$ nonnegative in the terminology from~\cite{kasigwa2017eventual} (see, Definition~1 in~\cite{kasigwa2017eventual}).
	According to Theorem~12 from~\cite{kasigwa2017eventual} we have that that $\rho(A)$ the spectral radius of $e^{A}$ is its eigenvalue, the matrix $e^A$ has the $\cK$-Perron-Frobenius property, while $e^{A^T}$ has the $\cK^\ast$ has the $\cK^\ast$-Perron-Frobenius property. By definition of these properties (see Definition~3 in~\cite{kasigwa2017eventual}), we have that the left $w$ and right $v$ eigenvectors corresponding to $\rho(e^A)$ can be chosen such that $v\in \cK$, $w\in \cK^\ast$. Furthermore, the eigenvalues $\lambda_i$ of $A$ are such that $e^{\lambda_i}$ are eigenvalues of $e^A$. Therefore, $\rho(e^A) = e^\lambda$ where $\lambda$ is some real eigenvalue of $A$. Since $e^\lambda \ge |e^{\mu}|$ for any other eigenvalue $\mu$ of $A$, we have that $\lambda = \eta(A)$. Finally, since $A$ is diagonalizable $w$, $v$ are the left and right eigenvectors of $A$ corresponding to $\mu$.  
	
	(ii) \emph{Necessity.}  In order to show this point, we will use the proof of Theorem~8 from~\cite{kasigwa2017eventual}, which we reproduce for completeness. For all $t\ge \tau_0$ the flow $e^{A t} x\in\inter(\cK)$  for any nonzero $x\in\cK$, which in the nomenclature of~\cite{kasigwa2017eventual} means that $A$ is eventually exponentially $\cK$-positive. According to Theorem~7 in~\cite{kasigwa2017eventual} this implies that $e^A$ has the strong Perron-Frobenius property (Definition~3 in~\cite{kasigwa2017eventual}), i.e., $\rho(e^A)$ is a simple eigenvalue of $e^A$ such that the corresponding left $w$ and right $v$ eigenvectors can be chosen to lie in $\inter(\cK^\ast)$ and $\inter(\cK)$, respectively. Since the spectra of $A$ and $e^A$ are linked as $\lambda(A) = e^{\lambda(A)}$, this implies that $\rho(e^A) = e^\lambda$ for some real eigenvalue $\lambda$ of $A$. Furthermore, for any other eigenvalue $\mu$ of $A$, we have that $e^\lambda > |e^\mu| = e^{\Re(\mu)}$, therefore $\lambda$ is the spectral abscissa $\eta(A)$ of $A$. Since $\eta(A)$ is a simple and real eigenvalue of $A$ it shares the same eigenvectors with the eigenvalue $\rho(e^A)$  of $e^A$. To summarize $\eta(A)$ is a simple and real eigenvalue of $A$, while its left $w$ and right $v$ eigenvectors can be chosen such that $v\in \inter(\cK)$, $w\in \inter(\cK^\ast)$, which completes the proof.

	\emph{Sufficiency.} Let $v\in\inter(\cK)$, $w\in\inter(\cK^\ast)$ and $\lambda_1$ be real, simple and $\lambda_1 >\Re(\lambda_i)$ for all $i\ge 2$. The matrix $e^{A t}$ can be decomposed as $e^{\lambda_1 t} v w^T + R(t)$, where $\lim_{t \rightarrow \infty}\|e^{-\lambda_1 t} R(t)\|_2  =0$ (cf.~\cite{mezic2015applications}). Then we have
	\[
	 e^{A t} x = e^{\lambda_1 t} \left(v w^T + e^{-\lambda_1 t} R(t)\right)x\,.
	\]
	Since $v\in \inter(\cK)$, $w\in \inter(\cK^\ast)$, $w^T x>0$ for any nonzero $x\in \cK$ and $v w^T x\in \inter(\cK)$ for any $x\in \cK$. Furthermore, since $\Re(\lambda_i) < \lambda_1$ for all $i\ge 2$, there exists a time $\tau_0$ such that $(v w^T + e^{-\lambda_1 t}R(t)) x\in \inter(\cK)$ for all $t\ge\tau_0$. Hence we have that $e^{A t} x \in \inter(\cK)$.
\end{proof}
\begin{rem}
	In the point (i), we can actually require that the spaces of left and right eigenvectors for the dominant eigenvalue $\lambda_1$ of the matrix $A$ are non-degenerate (i.e., the algebraic and geometric multiplicities are equal) instead of $A$ being diagonalizable. This holds if, for instance, $\eta(A)$ is a simple and real eigenvalue of $A$. This can be shown using decompositions in~\cite{mezic2015applications}, however, we consider this as a minor generalization. Furthermore, in what follows we will still at times assume diagonalizability of $A$, which will simplify our analysis. While non-diagonalizable matrices can still be met in applications, at this point we do not focus on this case. 
\end{rem}

\section{Results\label{s:results}}
\subsection{Eventual Positivity and Positivity }
It is known that there exists a cone $\cK$ such that the system $\dot x =  A x$ is strongly $\cK$-positive, if $\lambda_1$ is simple, real and $\lambda_1 > \Re(\lambda_i)$ for all $i \ge 2$, where $\lambda_i$'s are eigenvalues of $A$ (see Remark~3.34 in~\cite{stern1991exponential}). This implies that any strongly eventually $\cK$-positive system is also strongly $\hat \cK$-positive, with respect to a possibly different cone $\hat \cK$. Actually we can state the same for eventual positivity in general.
\begin{thm}
Let the system $\dot x = A x$ be eventually positive with respect to a proper cone $\cK$ and consider  the set  $\widetilde \cK$ defined as follows:
\begin{gather}\label{cone-ev-pos}
\widetilde \cK = \left\{x\in\R^n \Bigl| e^{A t} x \in \cK, \forall t\ge \tau_0 \right\},
\end{gather}
where $\tau_0$ is such that $e^{A t}\cK \subseteq \cK$ for all $t\ge\tau_0$. Then:\\
(i) the set $\widetilde \cK$ is a proper cone; \\
(ii) $e^{A t}\widetilde \cK \subseteq \widetilde \cK$ for all $t\ge 0$.
\end{thm}
\begin{proof}
(i) First we need to verify that $\widetilde \cK$ is a proper cone, if $\cK$ is. For every $x_1$, $x_2 \in \widetilde \cK$ and $\alpha_1$, $\alpha_2 \in \Rnn$ we have that $e^{A t} (\alpha_1 x_1 + \alpha_2 x_2)$ belongs to $\cK$ for all $t\ge \tau_0$, since $e^{A t} \alpha_1 x_1$, $e^{A t}\alpha_2 x_2$ belong to $\cK$ for all $t\ge\tau_0$. Therefore $\alpha_1 x_1 + \alpha_2 x_2$ belongs to $\widetilde \cK$. Similarly we can show that $\widetilde \cK\bigcap - \widetilde \cK = \{0\}$. The set $\widetilde \cK$ also has a non-empty interior since $\cK$ is proper and $\cK\subseteq \widetilde \cK$. It is left to show that $\widetilde \cK$ is closed. Consider a sequence $\{x_n\} \in \widetilde \cK$ for all integer $n$ and $\lim\limits_{n \rightarrow \infty} \|x_n - x\|_2 = 0$. We will show by contradiction that $x\in \widetilde \cK$. Let $x \not\in \widetilde \cK$, then there exists a time $s\ge \tau_0$ such that $e^{ A s} x \not \in \cK$. Since $\cK$ is closed the distance $\min_{z\in \cK}\|e^{A s} x - z \|_2 = \delta$ is larger than zero. Since $\{x_n\}_{n = 1}^{\infty}$ converges to $x$, there exists an integer $N$ such that for all $n\ge N$ we have $\|x_n - x\|_2 \le \delta/(2 \|e^{A s}\|_2)$. Now since $e^{A s}x_n\in \cK$ for all $n$ we have that $\|e^{A s} x_n - e^{A s} x\|_2\ge \delta$ for all $n$. Furthermore, for $n\ge N$ we have that
\begin{gather*}
0<\delta \le \|e^{A s} x_n - e^{A s} x\|_2 \le \| e^{A s}\|_2 \|x_n - x\|_2 \le \delta/2,
\end{gather*}
which is a contradiction, therefore $\widetilde \cK$ is closed.

(ii) We need to show that $e^{A t} \widetilde \cK \subseteq \widetilde \cK$ for all $t\ge 0$. Let $x\in\widetilde \cK$, we need to verify that $y = e^{At }x$ belongs to $\widetilde \cK$ for all $t\ge0$.
According to the definition of $\widetilde \cK$, for all $t\ge 0$, $s\ge \tau_0$ we have that $e^{A (t+s)} x \in \cK$. Consequently, we have that $ e^{A s} (e^{A t}x) = e^{A s} y\in \cK$ and $ y =e^{A t} x\in \widetilde \cK$, which completes the proof.
\end{proof}

This proposition raises a valid question: what is the benefit of studying eventual positivity if we can use the classical positivity, instead? One of the differences between eventual and strict definitions of $\cK$-positivity comes in the freedom of choice of the cones $\cK$. Most of the powerful results for positive systems were derived for cones, which are orthants such as $\Rnn^n$. For instance, if the system is orthant-positive then stability analysis and computation of Lyapunov function is significantly simplified. We will show in the next section that strongly eventually $\Rnn^n$-positive systems retain some properties of $\Rnn^n$-positive systems. At the same time we can also use $\widetilde \cK$-positivity to the full extent. While it is an open question if there exists a state-space transformation such that $\cK$-positive system becomes strongly $\Rnn^n$-positive, we can easily find a transformation transforming a system into strongly eventually $\Rnn^n$-positive under some mild restrictions.
\begin{prop} \label{prop:dom-eig}
	Consider the system $\dot x = A x$ with $\lambda_j$ being the eigenvalues of $A$. Let $\lambda_1$ be simple, real, negative and $\lambda_1 > \Re(\lambda_j)$ for all $j \ge 2$. Then there exists an invertible matrix $S$ such that the system $\dot z = S^{-1} A S z$ is strongly eventually $\Rnn^n$-positive.  
\end{prop}
\begin{proof}
	Let $v$ and $w$ be the right and left eigenvectors corresponding to the dominant eigenvalue $\lambda_1$ of $A$. According to Proposition~\ref{prop:ev-pos-dyn}, we need to show that there exists an invertible matrix $S$ such that $S^{-1} v$ and $S^T w $ are positive. Without loss of generality, we assume that the first entry of $w$ is nonzero. We can find a transformation $S$ such that $w^T S  = \bfone^T/n$ and $S \bfone  = v$ as follows
	\begin{gather*}
	S =  I_n/n + \begin{pmatrix}
	v -  \bfone/n & 0_{n\times n-1}
	\end{pmatrix}+\begin{pmatrix}
	\frac{(\bfone - w)^T}{w(1) n} \\
	0_{n-1\times n}
	\end{pmatrix} + S_0,
	\end{gather*}
	where $I_n$ is the $n\times n$ identity matrix, $0_{k\times m}$ is the $k\times m$ zero matrix, and $S_0$ is a zero matrix except for one entry, where $S_0(1,1) = (-1/w(1) +  w^T \bfone/(w(1) n))$. We verify the claim by direct calculations: 
	\begin{multline*}
	S \bfone =  \bfone/n + v - \bfone/n + \frac{(\bfone - w)^T}{w(1) n} \bfone \\+ (-1/w(1) +  w^T \bfone/(w(1) n)) = v
	\end{multline*}
	Similarly 
	\begin{multline*}
	w^T S =  w^T/n  + \begin{pmatrix}
	w^T v -  \frac{w^T\bfone}{n} & 0_{1\times n-1}
	\end{pmatrix} + \\ \frac{(\bfone - w)^T}{n} + 
	\begin{pmatrix}
	-1 + \frac{w^T \bfone}{n} & 0_{1\times n-1}
	\end{pmatrix}  =  \bfone^T /n
	\end{multline*}
	In this case new dominant eigenvectors are $\widetilde v = \bfone $ and $\widetilde w = \bfone/n$.  Since the eigenvalues do not change under the similarity transformation, the dominant eigenvalue of $S^{-1} A S$ is simple and real. Now invoking point (ii) in Proposition~\ref{prop:ev-pos-dyn} proves the result.
\end{proof}

Another piece of the puzzle that simplifies the analysis of $\Rnn^n$-positive systems is the invariance of these systems with respect to $\Rnn^n$. While eventually $\Rnn^n$-positive systems are invariant with respect to the cone $\widetilde \cK = \left\{x\in\R^n \Bigl| e^{A t} x \in \Rnn^n, \forall t\ge \tau_0 \right\}$, studying the cone $\widetilde \cK$ is not an easy task. However, under additional assumptions on $A$ we can derive a whole family of invariant cones. Suppose that the matrix $A$ is diagonalizable, $w^i$, $v^i$ are the left and right eigenvectors of $A$ corresponding to $\lambda_i$, where $\lambda_1 > \Re(\lambda_i)$ for all $i\ge 2$. We introduce the following family of cones:
\begin{multline}\label{ice-cream-cones}
\cK_{\alpha}=  \Bigg\{ y \in \R^n \Bigl| \left(\sum\limits_{i =2}^n \alpha_i |(w^i)^T y|^2\right)^{1/2} \le (w^1)^T y \Bigg\}\,, 
\end{multline}
where $\alpha_i$ are positive scalars and are chosen a priori. Every set $\cK_\alpha$ is a Lorentz cone subject to a transformation $T = \begin{pmatrix} w^1 & \sqrt{\alpha_2} w^2 & \cdots & \sqrt{\alpha_n}w^n\end{pmatrix}^T$, which is invertible since $w^i$ are linearly independent. We have the following result:
\begin{thm}\label{prop:pos-cone}
	Consider the system $\dot x = A x$ with a diagonalizable $A$ with eigenvalues $\lambda_j$. Let $\lambda_1$ be simple, real and negative, and $\lambda_1 > \Re(\lambda_j)$ for all $j\ge 2$. Then the system is $\cK_\alpha$-positive for any positive scalars $\alpha_i$ with $i=2,\dots,n$.
\end{thm}
\begin{proof} 
	Let $y = e^{A t}x$ for $t>0$, then 
	\begin{align*}
	& ((w^1)^T y)^2 - \sum\limits_{i =2}^n \alpha_i |(w^i)^T y|^2 \\
	& \qquad  = ((w^1)^T e^{A t} x)^2   - \sum\limits_{i =2}^n\alpha_i |(w^i)^T e^{A t} x|^2\\
	& \qquad = e^{2 \lambda_1 t} ((w^1)^T x)^2 - \sum\limits_{i =2}^n\alpha_i |e^{\lambda_i t}|^2  |(w^i)^T x|^2 \\
	& \qquad = e^{2 \lambda_1 t} \Bigl( ((w^1)^T x)^2 - 
	\sum\limits_{i =2}^n\alpha_i |e^{(\lambda_i-\lambda_1) t}|^2 |(w^i)^T x|^2\Bigl).
	\end{align*}
	Since $\lambda_1 >\Re(\lambda_i)$ for all $i>1$ we have that $|e^{(\lambda_i-\lambda_1) t}|^2 < 1$ for all $t>0$, which in turn implies that
	\begin{multline*}
	((w^1)^T y)^2 - \sum\limits_{i =2}^n \alpha_i |(w^i)^T y|^2 \ge \\e^{2 \lambda_1 t} \left( ((w^1)^T x)^2 - \sum\limits_{i =2}^n \alpha_i |(w^i)^T x|^2\right), 
	\end{multline*}
	and $y = e^{At} x$ belongs to $\cK_\alpha$ if $x$ does.
\end{proof}

\subsection{Lyapunov Functions}\label{s:stability} 

In the context of $\Rnn^n$-positive systems it is common to define Lyapunov functions on the invariant sets of the system such as $\Rnn^n$, for example. In the case of eventually $\Rnn^n$-positive systems, we need to use the cones $\cK_{\alpha}$ in order to introduce Lyapunov functions.

\begin{thm} \label{prop:lyap-ev-pos}
Let $\dot x = A x$ be a strongly eventually positive system, let $\lambda_1$ be a simple eigenvalue of the diagonalizable matrix $A$ such that $\lambda_1 >\Re(\lambda_j)$ for all $j\ge 2$ and let 
$v^1$, $w^1$ be the right and left eigenvectors of $A$ corresponding to $\lambda_1$. Then 

(i) the system $\dot x = A x$ is asymptotically stable if and only if there exists a positive vector $\xi$ such that $\xi^T A  \ll 0$;

(ii) for any positive vector $\xi$ such that $\xi^T A \ll 0$ and any $\alpha_i>0$ such that $\cK_\alpha\subseteq\Rnn^n$ the function $V_s(x) = \xi^T x$ is a Lyapunov function on $\cK_\alpha$. Moreover, the function $V_s(x) = x^T w^1$ 
is a Lyapunov function on $\cK_\alpha$ for any $\alpha_i>0$ including values for which $\Rnn^n \subseteq \cK_\alpha$;

(iii) there exist $\alpha_i>0$ such that $\cK_\alpha\subseteq\Rnn^n$ and $V_m(x) = \max\{x_1/v^1_1, \dots, x_n/v^1_n\}$ is a Lyapunov function on $\cK_\alpha$. 
\end{thm}
\begin{rem}
	If $\tau_0=0$, i.e. the system $\dot x = A x$ is positive then we can replace $\cK_{\alpha}$ with $\Rnn^n$ and the function $V_m$ always exists.
\end{rem}
\begin{proof}
(i) This result is taken from~\cite{olesky2009m}, while the proof is presented for completeness.

\emph{Necessity.}  If the system is asymptotically stable, then we can pick $\xi = w^1$, and $(w^1)^T A = \lambda_1 (w^1)^T \ll 0$. 

\emph{Sufficiency.} Let $\xi\gg 0$ be such that $\xi^T A \ll 0$, then $\lambda_1 \xi^T v^1 = \xi^T A v^1$. Since $\xi^T v^1$, $v^1$ are positive and $\xi^T A$ is negative, the eigenvalue $\lambda_1$ has to be negative. Since $\lambda_1$ is the dominant eigenvalue, the matrix $A$ is asymptotically stable.

(ii) We have that $V_s(x) > 0$ for all $x$ on $\{x | \xi^T x > 0 \}$. Furthermore, $\dot V_s(x) = \xi^T A x  < 0 $ for all $x\in \{x | \xi^T A x < 0\}$. It is straightforward to verify that the set $\cD =\{x | \xi^T A x < 0\}\cap \{x | \xi^T x > 0 \}$ contains $\Rnn^n\backslash\{0\}$ for all $\xi \gg 0$ such that $\xi^T A \ll0$. Since $V_s(0) =0$, $V_s(x)$ is a Lyapunov function on $\cK_\alpha$ for any $\alpha_i>0$ such that $\cK_\alpha \subseteq \Rnn^n \subseteq \cD$. The second part of the statement is straightforward. 

(iii) Note that if the maximum of $V_m(x)$ at time $t$ is achieved at the index $i$ such that $x_i/v_i^1\ge x_j/v_j^1$ for all $j$, then $\dot V_m(x) = \sum\limits_{j = 1}^n A_{i j} x_j/v^1_j$. Consider the set
\begin{gather*}
\cD_0 = \left\{x \in\Rnn^n \Bigl| \sum\limits_{j = 1}^n A_{i j} x_j < 0~\forall i\right\}.
\end{gather*}
It is clear that the ray $\{ x \in R^n |x = \beta v^1, \beta\in\Rnn \}$ lies in $\cD_0$. Therefore there exists $\cK_\alpha$ with large enough $\alpha_i >0$ such that for all $x\in\cK_\alpha$, $x$ also belongs to $\cD_0$. Therefore, $V_m(x)$ is negative on $\cK_{\alpha}$ and it is a Lyapunov function on $\cK_{\alpha}$.
\end{proof}

\subsection{Eventually Positive Input-Output Systems}
\label{s:ev-pos-io}
In this section, we consider control systems~\eqref{sys:in-out} and study \emph{internally eventually positive systems} (with respect to $\Rnn^n$), which eventually behave like internally positive systems. The complete characterization of such systems is provided by the following result. 
 
\begin{thm} \label{prop:ev-non-cont}
Consider the system~\eqref{sys:in-out} with $B\in\R^{n\times m}$, $C\in\R^{k\times n}$, and let $D = 0$. The system is internally eventually positive if and only if the following conditions hold:

(i) the system $\dot z = A z$ is eventually positive;

(ii) $e^{At}B\ge 0$ for all $t>0$; 

(iii) $C e^{At}\ge 0$ for all $t>0$.
\end{thm}
\begin{proof} \emph{Necessity}. Let the system be internally eventually positive. Then there exists $\tau_0$ such that $e^{At} x_0$ for all $t\ge \tau_0$ and $x_0\ge 0$. This implies that $e^{A t} \ge 0$ for all $t\ge \tau_0$ and consequently the system $\dot z = A z$ is eventually positive. The flow $C e^{A t} x_0$ has to be nonnegative for all $t\ge 0$, hence $C e^{A t}$ is nonnegative for all $t\ge 0$ and (iii) is shown. It is left to we verify the condition (ii). Let $x_0 = 0$, then we have 
\begin{equation*}
x(t)=\int_0^{t} e^{A(t-\tau)} B u(\tau) d\tau \ge 0
\end{equation*}
and since this holds for all $u\ge 0$, we have $e^{A t} B \ge 0$ for all $t\ge 0$. 

\emph{Sufficiency}. Decompose the flow of the system into:
\begin{gather*}
y(t) =  C x(t) = C e^{A t} x_0 + C \underbrace{\int_0^t  e^{A (t-\tau)} B u(\tau) d \tau}_{I(t)}.
\end{gather*}

The integral $I(t)$ is nonnegative for all $t$, since $e^{A (t-\tau)} B$, $u(\tau)$ are nonnegative for all $t\ge \tau\ge 0$ due to condition (ii). The vector $e^{A t}x_0$ is nonnegative for all $t\ge \tau_0$, where $\tau_0$ is the exponential index of $\dot z = A z$, hence $x(t)\ge 0$ for all $t\ge \tau_0$.

The matrix $C$ is nonnegative due to (iii), hence $C I(t)$ is also nonnegative. Finally, since $C e^{At}$ is nonnegative for $t \ge 0$, the function $y(t)$ is nonnegative for all $t\ge 0$.
\end{proof}

As we mentioned before, we cannot easily check if the system is eventually positive, since we have only necessary conditions. Moreover, checking conditions (ii) and (iii) may be computationally hard. Alternatively, we can use the following corollary.

\begin{cor}\label{cor:ev-pos}
Consider the system~\eqref{sys:in-out}, let $D$ be nonnegative, let the system $\dot z = A z$ be strongly eventually positive, i.e., there exists $\tau_0\ge 0$ such that $e^{A t}\Rnn^n\subseteq\Rnn^n$ for all $t\ge \tau_0$ being its exponential index. Let one of the following conditions be fulfilled

(i) Every column of $B$ (respectively, $C^T$) lie in the set $e^{A\tau_0} \Rnn^n$ (respectively, $e^{A^T\tau_0} \Rnn^n$); 

(ii) Every column of $B$ belongs to $\cK_\alpha$, while every column of $C^T$ belongs to $\cK_\alpha$ for some positive vector $\alpha$.

Then the system~\eqref{sys:in-out} is internally eventually positive. 
\end{cor}

We had to sacrifice some freedom in the choice of $B$, and $C$ matrices, however, we were able to obtain more freedom in the choice of the drift matrix $A$. Hence for every fixed $\tau_0$, our class of internally eventually positive systems is not a superset or a subset of the class of internally positive systems. At the same time the union over all $\tau_0\ge0$ provides a much larger class than the class of positive systems. 
\subsection{Energy Functions} 
In the context of model reduction we study the so-called energy functions
\[
\cQ_p = \|y\|_{\Lp[0,+\infty)}, \quad
\cC_p = \inf\limits_{\begin{smallmatrix}
 u\in\Lp(-\infty,0], \\
 u(\cdot) \ge 0 \\
 x(0)=x_0,\\
 x(-\infty) = 0
 \end{smallmatrix}} \|u\|_{\Lp(-\infty,0]}.
\]

In the context of internally positive systems the energy functions for $p=1,\infty$ were studied in~\cite{Sootla2012positive}, while the case $p=2$ was studied in~\cite{grussler2012symmetry}. One of the properties used in both papers was that $A$ has a nonpositive inverse and $e^{A t}$ is nonnegative for all $t>0$, which does not hold for internally eventually positive systems. But we still have the following result.

\begin{prop}\label{lem:ss-pos} Let the system~\eqref{sys:in-out} be internally eventually positive with an asymptotically stable $A$. Then the matrices $-A^{-1} B$ and $-C A^{-1}$ are nonnegative.
\end{prop}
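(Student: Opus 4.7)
The plan is to exploit the integral representation of $-A^{-1}$ that holds whenever $A$ is Hurwitz, namely
\begin{gather*}
-A^{-1} = \int_0^{\infty} e^{A t}\, dt,
\end{gather*}
which follows from $\frac{d}{dt} e^{A t} = A e^{A t}$ together with $e^{A t}\to 0$ as $t\to \infty$. Multiplying this identity on the right by $B$ (respectively on the left by $C$) gives
\begin{gather*}
-A^{-1} B = \int_0^{\infty} e^{A t} B\, dt, \qquad
-C A^{-1} = \int_0^{\infty} C e^{A t}\, dt.
\end{gather*}

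The next step is to invoke Proposition~\ref{prop:ev-non-cont}: because the system is internally eventually positive, the matrix-valued functions $t\mapsto e^{A t} B$ and $t\mapsto C e^{A t}$ are entrywise nonnegative for every $t\ge 0$ (conditions (ii) and (iii) of that proposition). Entrywise integration of a nonnegative integrand yields a nonnegative matrix, so both $-A^{-1}B$ and $-C A^{-1}$ have only nonnegative entries. The fact that the integrals converge (as improper Riemann integrals of matrix exponentials against constant matrices) is guaranteed by $A$ being Hurwitz, which gives exponential decay of $\|e^{A t}\|$.

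There is really no obstacle here beyond invoking the right representation; the argument is a one-line consequence of Proposition~\ref{prop:ev-non-cont} once one notices that the cases $p=1,\infty$ of the energy-function machinery from internally positive systems only require $e^{A t} B \ge 0$ and $C e^{A t} \ge 0$ pointwise, not full nonnegativity of $e^{A t}$. This is precisely the feature of internal eventual positivity that makes the result survive despite $e^{A t}$ itself failing to be nonnegative on $[0, \tau_0)$.
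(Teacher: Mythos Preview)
Your argument is correct and is essentially the paper's own proof: both compute $-A^{-1}B=\int_0^\infty e^{At}B\,dt$ (and dually $-CA^{-1}=\int_0^\infty Ce^{At}\,dt$) via the Hurwitz assumption, and then invoke the entrywise nonnegativity of $e^{At}B$ and $Ce^{At}$ supplied by the characterization in Proposition~\ref{prop:ev-non-cont}. The only cosmetic difference is that you first write $-A^{-1}=\int_0^\infty e^{At}\,dt$ and then multiply, whereas the paper integrates $e^{At}B$ directly.
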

\begin{proof} It is straightforward to verify that 
\begin{gather*}
\int_{0}^{\infty} e^{A t} B dt = A^{-1} e^{At}\Bigl|_{0}^{\infty} B = - A^{-1} B
\end{gather*}

Since $e^{A t}B$ is nonnegative for all $t$, the integral, which is equal to $-A^{-1} B$, is nonnegative as well. The fact that $-C A^{-1}$ is nonnegative is shown in a similar manner.
\end{proof}

Note that the vector $- A^{-1} B u_0$ for any $u_0$ can be seen as a steady state response toward the control signal $u_0$. Hence naturally, if $-A^{-1} B$ is nonnegative, then the steady state response to a constant nonnegative input is nonnegative. Therefore, we have an externally positive system such that its state $x(t)$ is asymptotically positive. The matrix $-C A^{-1}$ has of course a control theoretic dual interpretation to $- A^{-1} B$. Another interpretation of these matrices is through the defined above energy functions. Let $B\in\R^{n\times 1}$ and $C\in\R^{1\times n}$ and compute the observability energy function $\cO_1$ as follows:
\[
\cO_1 = \|y\|_{\Lone[0,+\infty)} = \int\limits_0^{+\infty} |C e^{A t} x_0| d t = - C A^{-1} x_0,
\]

while for $\cC_\infty$ we only have a lower bound. Let $p$ be the vector such that $- p^T A^{-1} B = 1$. Then using straightforward computations we have
\begin{multline*}
p^T x_0 = p^T \int\limits_{-\infty}^0 e^{A t} B u(t) dt \le \\
p^T \int\limits_{-\infty}^0 e^{A t} B  dt \cdot \|u \|_{\Linf(-\infty,0]}  = \|u \|_{\Linf(-\infty,0]},
\end{multline*}
and hence $\cC_\infty \ge p^T x_0$. The equality cannot be achieved for all $x_0$, since some states are not reachable with positive control signals and hence $\cC_\infty = +\infty$.

We can also devise some properties of the classical energy functions $\cO_2$, $\cC_2$, which are computed as 
\begin{subequations}\label{eq:lyap}
\begin{eqnarray}
 \cO_2(x_0) = \langle x_0, Q x_0\rangle^{1/2}      &  A^T Q  + Q A + C^T C = 0, \label{eq:lyap-cont}\\
 \cC_2(x_0) = \langle x_0, P^{-1} x_0\rangle^{1/2} & P A^T + A P + B B^T = 0.   \label{eq:lyap-obsv} 
\end{eqnarray}   
\end{subequations}

The solutions to~\eqref{eq:lyap-cont},~\eqref{eq:lyap-obsv} have the following integral forms:
\[
P = \int\limits_0^{\infty} e^{A t} B B^T e^{A^T t} dt,\quad Q = \int\limits_0^{\infty} e^{A^T t} C^T C e^{A t} dt.
\]
Since the system is eventually internally positive, the matrix $e^{A t} B$ is nonnegative for all $t\ge 0$, therefore $P$ is a nonnegative matrix. 
Moreover, if $\dot x = Ax$ is strongly eventually positive there exists a time $\tau_0$ such that $e^{A t}$ is a positive matrix for all $t\ge \tau_0$. If the matrix $B B^T$ is  nonnegative and nonzero, then the matrix $e^{A t} B B^T e^{A^T t}$ is irreducible and nonnegative for all $t\ge \tau_0$. This implies that $P$ is nonnegative and irreducible under the premise of Corollary~\ref{cor:ev-pos}. Similarly we can show that solution $Q$ to~\eqref{eq:lyap-obsv} is also nonnegative and irreducible. 

\subsection{Induced Norms}
It is clear that Proposition~\ref{prop:induced-norms} still holds and the norms can be computed using linear algebra, since we are dealing with an externally positive system. Moreover, using Proposition~\ref{lem:ss-pos}, we can extend internally positive systems results in~\cite{briat2013robust} to internally eventually positive systems.
\begin{prop}
Let a realization of the system $G$ satisfy the premise of Corollary~\ref{cor:ev-pos} and hence be internally eventually positive. Then 

(i) the matrix $A$ is Hurwitz and $\|G\|_{\rm \infty-ind} <\gamma$ if and only if there exists $\zeta > 0$ such that 
\begin{gather} \label{ineq:linf-norm}
\begin{pmatrix}
A & B \\ 
C & D
\end{pmatrix}\begin{pmatrix}
\zeta \\
\bfone
\end{pmatrix} \ll \begin{pmatrix}0 \\ \gamma \bfone \end{pmatrix} .
\end{gather}

(ii) the matrix $A$ is Hurwitz and $\|G\|_{\rm 1-ind} <\gamma$ if and only if there exists $\xi > 0$ such that 
\begin{gather} \label{ineq:l1-norm}
\begin{pmatrix}
A & B \\ 
C & D
\end{pmatrix}^T \begin{pmatrix}
\xi \\
\bfone
\end{pmatrix} \ll \begin{pmatrix}0 \\ \gamma \bfone \end{pmatrix}.
\end{gather}
\end{prop}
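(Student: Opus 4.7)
The overall plan is to reduce both assertions to the static test $G(0)\bfone \ll \gamma \bfone$ for the nonnegative DC-gain matrix $G(0) = D - C A^{-1} B$. This is legitimate because Proposition~\ref{prop:induced-norms} gives $\|G\|_{\rm \infty-ind} = \|G(0)\|_{\rm \infty-ind}$ for externally positive systems, and for a nonnegative matrix $M \ge 0$ the $\infty$-induced norm equals $\max_i (M \bfone)_i$. Thus for both implications I would relate~\eqref{ineq:linf-norm} to this static condition, using $-A^{-1}B \ge 0$, $-C A^{-1}\ge 0$, and $D\ge 0$ from Proposition~\ref{lem:ss-pos} and the standing hypotheses. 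The Perron--Frobenius eigenvectors $v^1 \gg 0$ and $w^1 \gg 0$ of $A$ and $A^T$ from Proposition~\ref{prop:ev-pos-dyn}(ii) play the role that the diagonal scaling of the Metzler case plays in~\cite{briat2013robust}.

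For the ``if'' direction of (i), set $\eta := -A\zeta - B\bfone \gg 0$. First I would show $A$ is Hurwitz by testing against $w^1$: since $B \ge 0$ (because $B = e^{A\tau_0}\tilde B$ with $\tilde B \ge 0$ and $e^{A\tau_0}\ge 0$), one has $\lambda_1 \zeta^T w^1 = (A\zeta)^T w^1 = -(B\bfone)^T w^1 - \eta^T w^1 < 0$, while $\zeta^T w^1 > 0$, so $\lambda_1 < 0$ and hence $A$ is Hurwitz. Then $\zeta = -A^{-1}\eta - A^{-1} B \bfone$, and invoking $-CA^{-1}\ge 0$ yields $C\zeta \ge -C A^{-1} B\bfone$, whence $G(0)\bfone \le C\zeta + D\bfone \ll \gamma\bfone$, giving $\|G\|_{\rm \infty-ind} < \gamma$ by Proposition~\ref{prop:induced-norms}.

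For the converse, the naive choice $\zeta = -A^{-1} B \bfone$ satisfies $A\zeta + B\bfone = 0$ and is not strict. I would perturb along the dominant mode and take $\zeta := -A^{-1}B\bfone + \delta v^1$ for small $\delta > 0$. Then $\zeta \gg 0$ (sum of a nonnegative and a strictly positive vector); $A\zeta + B\bfone = \delta \lambda_1 v^1 \ll 0$ since $\lambda_1 < 0$; and $C\zeta + D\bfone = G(0)\bfone + \delta Cv^1$, where $Cv^1 \ge 0$ because $Ce^{At}v^1 = e^{\lambda_1 t}Cv^1$ and $Ce^{At}\ge 0$ for $t\ge 0$ by Proposition~\ref{prop:ev-non-cont}(iii). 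Since $G(0)\bfone \ll \gamma\bfone$ strictly, a small $\delta$ preserves the strict inequality in the second block row. Part (ii) then follows by applying (i) to the transposed realization $(A^T, C^T, B^T, D^T)$, which inherits the premise of Corollary~\ref{cor:ev-pos} symmetrically and whose $\infty$-induced norm equals $\|G\|_{\rm 1-ind}$.

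The main obstacle, relative to the classical Metzler-$A$ proof in~\cite{briat2013robust}, is that $-A^{-1}$ is generally \emph{not} entrywise nonnegative for a strongly eventually positive $A$; only $-A^{-1}B$ and $-CA^{-1}$ are. This blocks a direct multiplication of the inequality $A\zeta + B\bfone \ll 0$ by $-A^{-1}$. Both the ``through $B$/$C$'' manipulation in the ``if'' direction and the Perron-eigenvector perturbation in the converse are designed to circumvent precisely this gap, and this is where the eventual-positivity eigenstructure from Section~\ref{s:even-pos} enters essentially.
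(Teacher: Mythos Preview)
Your proof is correct and follows essentially the same route as the paper: both directions reduce to the static test $G(0)\bfone \ll \gamma\bfone$ via Proposition~\ref{prop:induced-norms}, the sufficiency direction uses $-CA^{-1}\ge 0$ from Proposition~\ref{lem:ss-pos} in exactly the way the paper does (the paper phrases it as ``multiply $A\zeta+B\bfone\ll 0$ by the nonpositive $CA^{-1}$'', which is your $\eta$-manipulation unwound), and for necessity the paper sets $\zeta = x - A^{-1}B\bfone$ for any sufficiently small $x\gg 0$ with $Ax\ll 0$, of which your choice $x=\delta v^1$ is simply the natural concrete instance. Your explicit verification that $A$ is Hurwitz via pairing with $w^1$ actually fills in a step the paper asserts without justification.
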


\begin{proof} 
(i) Let the matrix $A$ be Hurwitz  and $\|G\|_{\rm \infty-ind} <\gamma$. Due to Proposition~\ref{prop:induced-norms} we have that $\|G(0)\|_{\rm \infty-ind} < \gamma$ or $\|D - C A^{-1} B\|_{\rm \infty-ind} < \gamma$. Since $-A^{-1}B$ is nonnegative due to Proposition~\ref{lem:ss-pos}, if the matrix $A$ is Hurwitz, the condition $\|G\|_{\rm \infty-ind} <\gamma$ is fulfilled if and only if
\begin{gather}\label{eq:linf-norm}
(D - C A^{-1} B)\bfone \ll \gamma \bfone.
\end{gather}

\emph{Necessity.} Since $A$ is Hurwitz, there exists $x \gg 0$ such that $A x \ll 0$ and let $\zeta  = x - A^{-1} B \bfone$. According to Proposition~\ref{lem:ss-pos}, $-A^{-1} B$ is nonnegative, therefore we have $\zeta \ge x \gg 0$, which implies that $A \zeta + B \bfone= A x \ll 0$. Due to~\eqref{eq:linf-norm}, we have that $(D + C (-A^{-1} B))\bfone \ll \gamma \bfone$, therefore for $x$ with a sufficiently small norm we get $(D + C (x-A^{-1} B))\bfone \ll \gamma \bfone$ and hence $C \zeta + D \bfone \ll \gamma\bfone$. Therefore, the inequality follows~\eqref{ineq:linf-norm}.

\emph{Sufficiency.} Let the inequality~\eqref{ineq:linf-norm} hold. The inequality $A \zeta + B\bfone \ll 0$ implies that $A$ is Hurwitz. Multiplying $A \zeta + B\bfone \ll 0$ with a nonnegative matrix $-C A^{-1}$ (according to Proposition~\ref{lem:ss-pos}) from the left gives $-C\zeta - CA^{-1} B \bfone \le 0$. Adding this to the inequality $C \zeta + D \bfone \ll \gamma \bfone$ gives~\eqref{eq:linf-norm} and consequently $\|G\|_{\rm \infty-ind} <\gamma$.

The point (ii) is shown in a similar manner.
\end{proof}
\section{Illustrative Example\label{s:example}} 
Consider the matrix
\begin{gather*}
A = \begin{pmatrix}
-6  &  10&  4\\
-7  &  2 & 12 \\
3  & -3 & -4
\end{pmatrix},
\end{gather*}
whose eigenvalues are $\lambda_1 = -0.2924$, $\lambda_{2,3} = -3.8538 \pm \imath 8.9941$. The eigenvectors corresponding to the dominant eigenvalue $\lambda_1$ are 
\begin{align*}
v_1 &= \begin{pmatrix}
0.8460 & 0.3091 & 0.4345
\end{pmatrix}^T, \\
w_1 &= \begin{pmatrix}
0.3474 & 0.3167 & 1.400
\end{pmatrix}^T.
\end{align*}
 This implies that the system $\dot x = A x$ is strongly eventually positive. We will analyse this matrix for various applications and different settings.

\emph{Eventually Positive System as a System With Slow Positive Dynamics:}
Consider the linear system
\begin{gather*}
\begin{pmatrix}
\dot x_1 \\  \dot x_2 \\ \varepsilon \dot x_3
\end{pmatrix} = A \begin{pmatrix}
x_1 \\ x_2 \\ x_3
\end{pmatrix} 
\end{gather*}
with $0<\epsilon \ll 1$. We can eliminate the variable $x_3$ using singular perturbation theory and obtain the system
\begin{align*}
&\begin{pmatrix}
\dot {\tilde x}_1 \\ \dot {\tilde x}_2
\end{pmatrix}  = \tilde A \begin{pmatrix}
\tilde x_1 \\ \tilde x_2
\end{pmatrix}, \text{ where} \\
&\tilde A  = \begin{pmatrix}
-6  &  10 \\
-7  &  2 \end{pmatrix} 
+ \frac{1}{4} \begin{pmatrix}  4 \\ 12
\end{pmatrix} 
\begin{pmatrix}
3 \\ -3
\end{pmatrix}^T = \begin{pmatrix}
-3  &  7\\
2  & -7
\end{pmatrix}.
\end{align*}
The drift matrix $\tilde A$ of the reduced order system is Metzler, while the matrix $A$ is not. It can be verified that the flow $x_1(t)$, $x_2(t)$ converges to $\tilde x_1(t)$, $\tilde x_2(t)$, while $x_3(t)$ converges to a constant. Moreover, after some time $\tau_0>0$, the flow of the full system $x(t)$ becomes positive. At the same time, since the matrix $A$ has positive dominant eigenvectors corresponding to $\lambda_1$, we can reach the same conclusion with $\varepsilon = 1$. This example illustrates that eventual positive can originate from systems with slow positive dynamics and the transient is caused by fast dynamics. However, eventual positivity is not limited to this case.

\emph{Lyapunov Functions:} Proposition~\ref{prop:lyap-ev-pos} shows that, for eventually positive systems, there exist sum-separable functions $V_s(x)$ on invariant sets containing $\Rnn^n$. This is not the case for the so-called max-separable Lyapunov functions $V_m = \max\{x_1/\zeta_1,\dots x_n/\zeta_n\}$, where $\zeta\gg 0$ and $A\zeta\ll 0$. Similarly, sum-separable Lyapunov functions of the form $V_d(x) = \sum\limits_{i =1} p_i x_i^2$ on $\R^n$ may not exist. This phenomenon can be illustrated on the matrix $A$. Note that $V_d$ is a Lyapunov function if and only the matrix $A^T P + P A$ is negative definite with $P=\diag{p_1, \dots, p_n}$. Since the entry $\{2,2\}$ is positive, there exists no diagonal matrix $P$ such that $A P + P A^T$ is negative definite. The argument for the max-separable function $V_m(x)$ is similar but more subtle. Let at time $t$ the maximum be attained at the index $i$ and let the function be differentiable at $x$. Then $\dot V_m(x) = \sum \limits_{j = 1}^n A_{i j} x_j/\zeta_i$ should be negative for all $x\in\Rnn^n$ such that $x_i/\zeta_i > x_j/\zeta_j$, which includes $e^i$, hence $A_{i i} < 0$. 

\begin{figure}[t]\centering
	\includegraphics[width = .48\columnwidth]{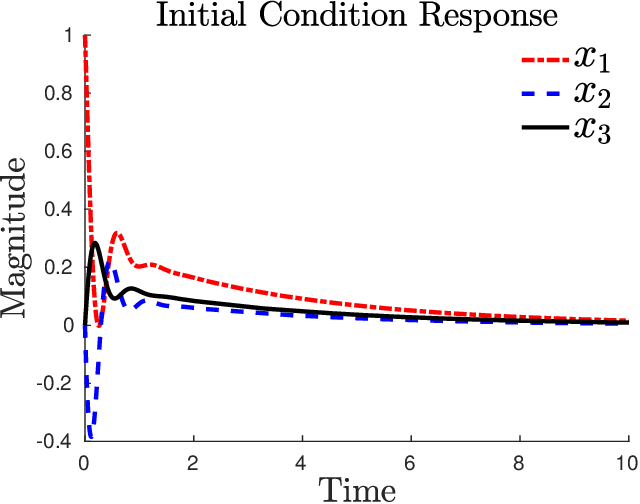}
	\includegraphics[width = .48\columnwidth]{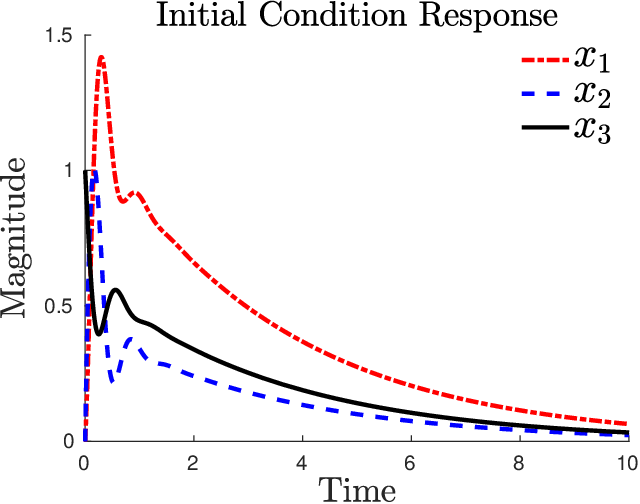}
	\caption{Trajectories of the dynamical system  $\dot x = A x$ with initial conditions $x_{0 1} = (1~~0~~0)^T$ (left panel) and $x_{0 2} = (0~~0~~1)^T$ (right panel).} \label{fig:trajectories}
\end{figure}

\emph{Trajectories of the Dynamical System:} We compute two trajectories of the system $\dot x = Ax$ with initial conditions $x_{0 1} = \begin{pmatrix}
1 & 0 & 0
\end{pmatrix}^T$, $x_{0 2} = \begin{pmatrix}
0 & 0 & 1
\end{pmatrix}^T$, which are depicted in Figure~\ref{fig:trajectories}. The trajectory of $x_2$ is always positive with the initial condition $x_{0 1}$, while the trajectories of $x_1$ and $x_3$  are becoming negative for some time, however, the trajectory eventually enters the orthant $\Rp^3$ and stays for all remaining times $t$. For the initial condition $x_{0 2}$, the picture is different since all trajectories are positive for all $t\ge0$. This means that $x_{02}\in e^{-A\tau_0} \cK$. 

\emph{Impulse Responses of the Control System:} Consider now the system
\begin{align*}
\dot x& = A x + B u, \\
y &= C x,
\end{align*}
where matrix $A$ is defined above and the system is single-input-single-output. The main question in this paragraph is how to choose the matrices $B$ and $C$ so that the system becomes externally positive, i.e., $C e^{At} B$ is positive for all $t\ge0$. We can answer this question using the simulations in the previous paragraph. Indeed, if we choose $B = \begin{pmatrix}
b & 0 & 0
\end{pmatrix}^T$ for some positive scalar $b$, then the output measuring the state $x_2$ will be positive for all $t\ge0$ given a nonnegative input $u(t)$. If we choose $B = \begin{pmatrix}
0 & 0 & b
\end{pmatrix}^T$ for some positive scalar $b$, then the output from all the states will be nonnegative for all $t\ge0$ given a nonnegative input $u(t)$.

\section{Conclusion\label{s:con}}
In this paper, we considered eventually positive linear systems, and their extensions to the input-output setting. While some of the properties of positive systems are retained, analysis of and certificates for eventually positive systems are much harder than for positive systems. Nevertheless, the theoretical development of this class of systems can offer simpler analysis and control tools for linear systems.

The idea of eventual positivity is also extended to nonlinear dynamical systems~\cite{sootla2015koopman} and semigroups of linear operators~\cite{daners2016eventually}, where the certificates for eventual positivity are similar and derived using eigenfunctions/modes of the Koopman operator and eigenvectors of the semigroup. While the input-output extensions in both cases will be of most interest for control community, these extensions are technically more involved than their autonomous counterparts. 

%\bibliography{Biblio}
%\bibliographystyle{plain}

\end{document}